\documentclass[11pt, a4paper]{article}
\title{\bf Sur le th\'eor\`eme  de Brauer--Siegel g\'en\'eralis\'e}
\author{{\Large Richard \textsc{Griffon}} \and {\Large Philippe \textsc{Lebacque}} \and {\Large Ga\"el \textsc{R\'emond}}}
\date{}

\usepackage{geometry}
\geometry{a4paper, left=20mm,right=20mm,top=20mm,bottom=20mm}
\usepackage[frenchb]{babel}
\usepackage[utf8]{inputenc}
\usepackage{lmodern}
\usepackage{amssymb,amsmath,amscd, amsthm}  
\usepackage{mathrsfs}
\usepackage{tikz-cd}
\usepackage{enumerate}
\usepackage{graphicx}
\usepackage{url}
\usepackage{multicol}
\newtheoremstyle{namedtheo}%
  {}{}%
  {\itshape}
  {}{\bfseries}
  {.}{ }
  {\thmname{#1}\thmnumber{ #2}\thmnote{\,(#3)}}
	\newtheorem{theo}{Th\'eor\`eme}[section]
	\newtheorem{introtheo}{Th\'eor\`eme}
	\newtheorem{coro}[theo]{Corollaire}
	\newtheorem{lemm}[theo]{Lemme}
	\newtheorem{prop}[theo]{Proposition}
	\newtheorem{introprop}[introtheo]{Proposition}
{\theoremstyle{definition}
	\newtheorem{defi}[theo]{D\'efinition}
	\newtheorem{exple}[theo]{Exemple}
	\newtheorem{rema}[theo]{Remarque}}
{\theoremstyle{namedtheo}
	\newtheorem*{theono}{Th\'eor\`eme}
	\newtheorem*{conjno}{Conjecture}}

\relpenalty=1000
\binoppenalty=1000
\usepackage{titlesec}
\titleformat{\subsection}[runin]%
        {\bfseries}%
        {\thesubsection.}%
        {0.2em}{}[.\hspace{0.4em}-- ]        
 \titleformat{\section}%
        {\center\Large\bfseries}%
        {\thesection.}%
        {0.2em}{}[]%
\usepackage[colorlinks=true, citecolor= teal!70, linkcolor= blue!70, urlcolor=black!60, pagebackref]{hyperref}
\newcommand{\ds}{\displaystyle}
\newcommand{\solv}{^{\mathrm{solv}}}
\newcommand{\N}{\mathbb{N}}
\newcommand{\Z}{\mathbb{Z}}
\newcommand{\Q}{\mathbb{Q}}
\newcommand{\R}{\mathbb{R}}
\newcommand{\C}{\mathbb{C}}

\newcommand{\Lcal}{\mathscr{L}} 
\newcommand{\Kcal}{\mathscr{K}}

\newcommand{\Hcacal}{\mathcal{H}}

\newcommand{\BS}{\mathcal{BS}}

\DeclareMathOperator{\Gal}{Gal}
\renewcommand{\ker}{\mathrm{Ker}\,}
\newcommand{\im}{\mathrm{Im}\,}
\newcommand{\comp}{\xi}
\newcommand{\clgal}{\eta}
\newcommand{\itref}[1]{{\it (\ref{#1})}}
\newcommand{\refcond}[1]{\hyperref[#1]{{\it(C\ref*{#1})}}}

\begin{document}
\maketitle 
\vspace{-2em}\begin{flushright}
\textit{\`A la m\'emoire de notre ami Alexey Zykin,\\ dont les id\'ees nous inspirent toujours,\\ et de sa femme Tatyana}
\end{flushright}\medskip

\noindent\hfill\rule{7cm}{0.5pt}\hfill\phantom{.}
\medskip

\noindent \textbf{R\'esum\'e --} 
Nous \'etendons le th\'eor\`eme de Brauer--Siegel \`a de nouvelles familles de corps de nombres,  comme par exemple celles form\'ees de corps contenus dans la cl\^oture r\'esoluble d'un corps de nombres fix\'e. Nous d\'efinissons pour cela une notion de complexit\'e galoisienne d'une extension de corps de nombres et montrons que toute famille asymptotiquement exacte dans laquelle la complexit\'e galoisienne ne cro\^it pas trop vite v\'erifie le th\'eor\`eme de Brauer--Siegel g\'en\'eralis\'e conjectur\'e par Tsfasman--Vl\u{a}du\c{t}. Ce crit\`ere unifie et \'etend divers r\'esultats ant\'erieurs. L'essentiel de la d\'emonstration consiste \`a \'etablir un nouveau principe de descente de z\'eros pour les fonctions z\^eta des corps de nombres qui raffine celui de Stark en faisant intervenir notre complexit\'e galoisienne.
\medskip

\noindent \textbf{Abstract --} 
We extend the Brauer--Siegel theorem to new families of number fields, both in the classical setting of asymptotically bad families and in the more general framework due to Tsfasman and Vl\u{a}du\c{t} of asymptotically exact families. We introduce a notion of Galois complexity for extensions of number fields, and show that the generalized Brauer--Siegel theorem, as conjectured by Tsfasman and Vl\u{a}du\c{t}, holds for families in which the complexity does not grow too fast. This allows to unify and extend all previously known results. The crucial step in our work is the proof of a new version -- stated in terms of our Galois complexity -- of a fundamental principle due to Stark descending exceptional zeroes of zeta functions down to quadratic number fields. Among the hitherto unknown cases we are able to treat are the families of number fields contained in the solvable Galois closure of a given number field.
\medskip

\noindent{\it Mots-Cl\'es:} 
Th\'eorie asymptotique des corps de nombres, Th\'eor\`eme de Brauer--Siegel.

\noindent{\it 2020 Math. Subj. Classification:}  
11R42, 
11R47, 
11R99. 

\noindent\hfill\rule{7cm}{0.5pt}\hfill\phantom{.}
\medskip

\section*{Introduction}
\pdfbookmark[0]{Introduction}{Introduction} 
\setcounter{section}{0}

 Le th\'eor\`eme de Brauer--Siegel d\'ecrit le comportement asymptotique du produit du nombre de  classes~$h(K)$  d'un corps de nombres $K$  par son r\'egulateur des unit\'es $R(K)$ en termes de son discriminant~$\Delta(K)$.  C'est un r\'esultat central de la th\'eorie des nombres du XX\ieme{} si\`ecle, qui a trouv\'e de nombreuses applications \`a  des domaines vari\'es des math\'ematiques (par exemple  aux empilements de sph\`eres \cite{RT},  aux exposants de groupes de classes \cite{HM18},  en g\'eom\'etrie anab\'elienne \cite{Iv14},  pour la conjecture d'Andr\'e--Oort pour $\mathcal{A}_g$ \cite{Tsim18})  et de l'informatique (th\'eorie des graphes \cite{ST},  cryptosyst\`emes \`a cl\'e publique \cite{HM20}).  R\'ecemment, le th\'eor\`eme de Brauer--Siegel a \'egalement donn\'e lieu \`a des \'enonc\'es en dimension sup\'erieure: ainsi, on pourra consulter \cite{HP16} et  \cite{GriffonPHD16, GriffonLegendre18, GriffonAS18} pour un analogue dans certaines suites de courbes elliptiques.    Mentionnons enfin que, faisant suite \`a la preuve dans \cite{GriffonFermat18} d'un th\'eor\`eme de Brauer--Siegel   pour la suite des surfaces de Fermat sur un corps fini,   Hindry \cite{HindryBSgeneral19} a propos\'e une vaste g\'en\'eralisation conjecturale pour des  suites de vari\'et\'es sur les corps finis.  
 \medskip
 
Le th\'eor\`eme de Brauer--Siegel classique peut s'\'enoncer de la mani\`ere suivante  (voir \cite{Siegel35}, \cite{Brauer47}, \cite{Brauer50} ou bien les chapitres XIII et XVI de \cite{LangANT}).
		\begin{theono}[Brauer--Siegel]\label{itheo:BS.classique}
		 Soit $(K_i)_{i\in\N}$ une suite de corps de nombres telle que
		 \begin{equation}\label{ieq:asymp.bad}\tag{M}
	 	 	\lim_{i\to\infty} \frac{{\log|\Delta(K_i)|}}{[K_i:\Q]} = +\infty.
		 \end{equation} 
		Alors on a 
			\[\lim_{i\to\infty}\frac{\log\big(h(K_i)\,  R(K_i)\big)}{\log\sqrt{|\Delta(K_{i})}|} = 1\]
		si l'une des conditions suivantes est remplie : 
		\begin{enumerate}[(C1)]
		\setcounter{enumi}{-1}
			\item\label{c:GRH} l'hypoth\`ese de Riemann g\'en\'eralis\'ee est vraie,
			\item\label{c:degborne} la suite $([K_i:\Q])_{i\in\N}$ est born\'ee,
			\item\label{c:galois} pour tout $i\in\N$ l'extension $K_i/\Q$ est galoisienne.
		\end{enumerate}
		\end{theono} 

Plus r\'ecemment, Zykin, Dixit et Wong ont donn\'e des renforcements de cet \'enonc\'e en rempla\c{c}ant la condition \refcond{c:galois} par d'autres hypoth\`eses de nature galoisienne. Pour les formuler, rappelons une d\'efinition. Nous dirons qu'une extension $L/K$ de corps de nombres est \emph{galoisienne par pas} s'il existe une suite $K=L_0 \subset L_1 \subset \dots\subset L_s=L$  de sous-extensions telle que, pour tout $i\in\{1,...,s\}$, l'extension $L_{i}/L_{i-1}$ est galoisienne. Avec ce vocabulaire, les articles \cite{Zykin}, \cite{Dixit} et \cite{Wong22} \'etablissent respectivement le th\'eor\`eme sous les conditions:
{\it \begin{enumerate}[\it (C1)]
\setcounter{enumi}{2}
	\item\label{c:galoisparpas} pour tout $i\in\N$ l'extension $K_i/\Q$ est galoisienne par pas, 
	\item\label{c:clotgalres} pour tout $i\in\N$	 la cl\^oture galoisienne de l'extension $K_i/\Q$ est r\'esoluble,
	\item\label{c:wong} pour tout $i\in\N$ il existe un corps $\Q\subset K'_i\subset K_i$ tel que la cl\^oture galoisienne de $K_i/K'_i$ est r\'esoluble et $K'_i/\Q$ est galoisienne par pas.
\end{enumerate} }

Notre but consiste \`a affaiblir encore l'hypoth\`ese du th\'eor\`eme pour traiter des suites de corps beaucoup plus g\'en\'erales qui, en particulier, engloberont naturellement celles qui v\'erifient l'une des conditions \refcond{c:degborne} \`a \refcond{c:wong}. Nous rempla\c{c}ons de plus \eqref{ieq:asymp.bad} par une hypoth\`ese moins restrictive en suivant Tsfasman et Vl\u{a}du\c{t} (voir \cite{TV}). En effet, ils ont remarqu\'e que, pour certaines suites $\Kcal=(K_i)_{i\in\N}$ ne v\'erifiant pas \eqref{ieq:asymp.bad}, la limite de $\log\big(h(K_i)\,R(K_i)\big)/\log\sqrt{|\Delta(K_i)|}$ peut exister et \^etre diff\'erente de $1$. Nous rappellerons dans la partie suivante comment ils d\'efinissent une notion de famille asymptotiquement exacte $\Kcal$ et lui associent un nombre r\'eel $\beta(\Kcal)\in[2/5, 6/5]$. Les suites $\Kcal$ v\'erifiant \eqref{ieq:asymp.bad} sont asymptotiquement exactes et satisfont $\beta(\Kcal)=1$. D'autres exemples sont obtenus en consid\'erant les \emph{tours}, c'est-\`a-dire les suites avec $K_i\subset K_{i+1}$ pour tout $i\in\N$. Dans ce langage, les travaux de Tsfasman et Vl\u{a}du\c{t} sugg\`erent la conjecture suivante.

			\begin{conjno}[Tsfasman--Vl\u{a}du\c{t}] 
			 Si $\Kcal=(K_i)_{i\in\N}$ est une famille asymptotiquement exacte de corps de nombres, nous avons
			\begin{equation*}
			\lim_{i\to\infty}\frac{\log\big(h(K_i) \, R(K_i)\big)}{\log\sqrt{|\Delta(K_{i})|}} 
			= \beta(\Kcal). 
			 \end{equation*}
			\end{conjno}

Nous dirons qu'une famille asymptotiquement exacte de corps de nombres \emph{v\'erifie le th\'eor\`eme de Brauer--Siegel g\'en\'eralis\'e} si la conjecture de Tsfasman--Vl\u{a}du\c{t} est vraie pour cette famille. Comme le th\'eor\`eme classique, cette conjecture est vraie sous l'hypoth\`ese \refcond{c:GRH}   d'apr\`es Tsfasman--Vl\u{a}du\c{t} eux-m\^emes et sous \refcond{c:galoisparpas} d'apr\`es Lebacque \cite{Lebacque}. On la conna\^it \'egalement sous \refcond{c:clotgalres} ou \refcond{c:wong} si l'on se restreint au cas des tours (voir \cite{Dixit} et \cite{Wong22} respectivement).

Ici encore, notre th\'eor\`eme principal unifie et g\'en\'eralise tous ces travaux. Pour l'\'enoncer, introduisons la d\'efinition suivante: si $L/K$ est une extension de corps de nombres, sa complexit\'e galoisienne $\comp_{L/K}$ est l'entier
	 \[\comp_{L/K} = \min\left\{ \max_{1\leq i \leq s} [L_i:L_{i-1}]_{\Gal}, \ K=L_0 \subset L_1 \subset \dots \subset L_s=L\right\}, \]
o\`u $[L_i:L_{i-1}]_{\Gal}$ d\'esigne le degr\'e de la cl\^oture galoisienne de $L_i/L_{i-1}$  et le minimum est pris sur toutes les cha\^ines finies de sous-extensions de $L/K$.
		\begin{introtheo}\label{itheo:principal}
		Le th\'eor\`eme de Brauer--Siegel g\'en\'eralis\'e est vrai pour toute famille asymptotiquement exacte $(K_i)_{i\in\N}$ de corps de nombres telle que 
		\begin{equation*}
			\lim_{i\to\infty} \frac{\log|\Delta(K_i)|}{\log\comp_{K_i/\Q}} = +\infty.
		\end{equation*}
		\end{introtheo}

Nous constatons que, pour tous corps de nombres $K\subset L \subset M$, nous avons 
	\[ \comp_{M/K}\leq \max\big\{\comp_{M/L}, \comp_{L/K}\big\}, \qquad  1\leq \comp_{L/K}\leq [L:K]!\] 
et, si $L/K$ est galoisienne par pas, $\comp_{L/K}\leq [L:K]$. Nous montrerons aussi (voir lemme \ref{lemm:compl.gal.clotgalres}) \linebreak $\log\comp_{L/K}\leq 3(\log[L:K])^2$ si la cl\^oture galoisienne de $L/K$ est r\'esoluble. Ces in\'egalit\'es permettent de v\'erifier rapidement que chacune des conditions \refcond{c:degborne} \`a \refcond{c:wong}  entra\^ine l'hypoth\`ese du th\'eor\`eme \ref{itheo:principal}, qui englobe donc bien les r\'esultats cit\'es ci-dessus. Les familles obtenues sont bien plus g\'en\'erales : \`a titre d'exemple, les in\'egalit\'es pr\'ec\'edentes montrent aussi que le th\'eor\`eme \ref{itheo:principal} couvre les familles asymptotiquement exactes $(K_i)_{i\in \N}$ v\'erifiant
{\it 
\begin{enumerate}[\it (C1)]
\setcounter{enumi}{5}
	\item\label{c:admissible} il existe un entier $\mu$ tel que, pour tout $i\in\N$, il existe une suite de corps 
		\[\Q= L_{i, 0}\subset L_{i, 1} \subset \dots \subset L_{i, s_i} = K_i\] de sorte que, pour $1\leq j\leq s_i$, l'extension $L_{i,j}/L_{i, j-1}$ est soit galoisienne, soit de cl\^oture galoisienne r\'esoluble, soit de degr\'e au plus $\mu$.
\end{enumerate}}
\medskip

Toutes les approches du th\'eor\`eme de Brauer--Siegel se basent sur la formule des classes de Dirichlet pour traduire le probl\`eme en termes de fonctions z\^eta. Notons $\zeta_K$ la fonction z\^eta de Dedekind d'un corps de nombres $K$ et $\zeta_K^\ast(1)$ son r\'esidu en $1$. Dans les \'enonc\'es ci-dessus, il s'agit alors d'encadrer $\zeta_{K_i}^\ast(1)$ et toute la difficult\'e r\'eside dans la minoration en raison de la pr\'esence \'eventuelle de z\'eros r\'eels de $\zeta_{K_i}$ proches de $1$. Sous la condition \refcond{c:GRH} ces z\'eros n'existent pas; sans cette information, il faut t\^acher de montrer qu'ils ne peuvent pas tendre trop vite vers $1$ lorsque $i$ tend vers l'infini. Dans notre situation, nous verrons en reprenant l'approche de Lebacque \cite{Lebacque} bas\'ee sur le th\'eor\`eme de Mertens que le th\'eor\`eme \ref{itheo:principal} d\'ecoule assez rapidement du r\'esultat originel de Siegel \cite{Siegel35} pour les corps quadratiques et du principe de descente de z\'eros suivant.
		\begin{introtheo}\label{itheo:desc.gael}
		Soit $L/K$ une extension de corps de nombres. Si la fonction z\^eta $\zeta_L $ admet un z\'ero r\'eel $\rho$ v\'erifiant
		\[\rho \in \left[1 - (32g(L)\,\comp_{L/K})^{-1} , 1\right[,\]
		alors il existe une sous-extension $F$ de $L/K$ avec $[F:K]\leq 2$ et $\zeta_F(\rho)=0$.
		\end{introtheo}

Cet \'enonc\'e g\'en\'eralise un r\'esultat fondamental de Stark \cite{Stark} et une variante due \`a Murty \cite{Murty01} qui \'etaient utilis\'es dans les travaux ant\'erieurs de Tsfasman--Vl\u{a}du\c{t}, Zykin, Lebacque, Dixit et Wong. Notons que nous nous int\'eressons uniquement ici au th\'eor\`eme de Brauer--Siegel mais le th\'eor\`eme \ref{itheo:desc.gael} permet aussi d'am\'eliorer les autres applications du r\'esultat de Stark: par exemples les th\'eor\`emes 1 et 2 de \cite{Stark} sont valables en rempla\c{c}ant $g(n)$ par $\comp_{k/\Q}$ (voir le paragraphe \ref{ss:commentaires} ci-dessous). 
\medskip

Nous mentionnons pour terminer quelques cons\'equences du th\'eor\`eme \ref{itheo:principal}.
		\begin{introtheo}\label{itheo:ext.infinies}
		Soient $(K_i)_{i\in\N}$ et $(L_i)_{i\in\N}$ une famille et une tour de corps de nombres.  
		Si 
		\[\lim_{i\to\infty} \frac{\log|\Delta(K_i)|}{\log\comp_{K_i/\Q}} = +\infty \qquad \text{ et }\qquad \bigcup_{i\in\N} K_i \subset \bigcup_{i\in\N}L_i,\]
		alors la tour $(L_i)_{i\in\N}$ v\'erifie le th\'eor\`eme de Brauer--Siegel g\'en\'eralis\'e.
		\end{introtheo}
Ce crit\`ere assez souple s'applique \`a de nombreuses tours.
		\begin{introtheo}\label{itheo:solvable}
		Soit $K$ un corps de nombres.  
		Toute famille asymptotiquement exacte de corps de nombres contenus dans sa cl\^oture r\'esoluble $K\solv$ v\'erifie le th\'eor\`eme de Brauer--Siegel g\'en\'eralis\'e.
		\end{introtheo}

Puisqu'un $p$-groupe est r\'esoluble, nous en d\'eduisons en particulier:
		\begin{introprop} \label{prop:Sptour}
		Soient $K$ un corps de nombres, $p$ un nombre premier et $S$ un ensemble 
		de places de $K$. 
		Soit $K_S(p)$ la pro-$p$-extension maximale de $K$ non ramifi\'ee en dehors de $S$.
		Toute famille asymptotiquement exacte  de corps de nombres contenus dans $K_S(p)$ 
		v\'erifie le th\'eor\`eme de Brauer--Siegel g\'en\'eralis\'e. 
		\end{introprop}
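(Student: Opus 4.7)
The plan is to reduce the claim to an application of Theorem~\ref{itheo:principal} with a constant family of base fields $K_i := K$. The key structural observation is that $K_S(p)/K$ is a Galois pro-$p$-extension: its defining property---being the maximal pro-$p$-extension of $K$ unramified outside $S$---is stable under the action of $\Gal(\bar K /K)$, so $K_S(p)/K$ is Galois, with pro-$p$ Galois group by construction.

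Granted this, for any finite subfield $L \subset K_S(p)$ containing $K$, the Galois closure of $L/K$ inside $K_S(p)$ corresponds to an open normal subgroup of $\Gal(K_S(p)/K)$; the corresponding quotient is therefore a finite $p$-group, hence nilpotent, and in particular solvable. Thus every such $L/K$ automatically has solvable Galois closure, which is exactly the shape of hypothesis~(1) of Theorem~\ref{itheo:principal}.

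To apply that theorem to the family $\Lcal=(L_i)_i$, I would first reduce to the case $K \subset L_i$ for every $i$: when this fails, I replace $L_i$ by the compositum $L_i K \subset K_S(p)$, which changes degrees and genera by a factor controlled by $[K:\Q]$, so that asymptotic exactness and the Brauer--Siegel ratio are preserved (this sort of invariance under bounded-degree modifications being one of the standard facts of the Tsfasman--Vladuts formalism recalled in Section~\ref{section:notations}). I then set $K_i := K$ for all $i$: the subfamily $(K_i)_i$ is constant, hence trivially of bounded genus---the simplest alternative in condition~(2) of Theorem~\ref{itheo:principal}---and by the paragraph above $L_i/K_i$ has solvable Galois closure, so Theorem~\ref{itheo:principal} applies and delivers the generalised Brauer--Siegel theorem for $\Lcal$.

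The ``in particular'' assertion will then follow at once: an infinite global field $L_\infty \subset K_S(p)$ is by definition exhibited as a nested union $\bigcup_i L_i$ of number fields, each lying in $K_S(p)$, and such a tower is automatically asymptotically exact in the sense of Tsfasman--Vladuts. In this approach there is no genuine obstacle beyond invoking Theorem~\ref{itheo:principal}: all the analytic work has been absorbed into it, and what remains is the elementary observation that finite $p$-groups are solvable. The only point that deserves a moment's care is the reduction $L_i \leadsto L_i K$, to ensure that one can legitimately take $K$ as the common base field.
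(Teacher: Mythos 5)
Your structural observations are correct: $K_S(p)/K$ is a Galois pro-$p$-extension, every finite subextension $L/K$ has its Galois closure inside $K_S(p)$ with finite $p$-group (hence solvable) Galois group, and an infinite global field inside $K_S(p)$ is a tower, hence asymptotically exact. In the special case where every $L_i$ contains $K$, your application of Theorem~\ref{itheo:principal} with the constant base $(K)_i$ (bounded genus, solvable Galois closure of $L_i/K$) is valid. The gap is in the reduction you yourself flag as the delicate point: replacing $L_i$ by $L_i\cdot K$ produces a \emph{different} family, with different invariants $\phi_q$ and different residues $\zeta^\ast(1)$, and the generalised Brauer--Siegel property is \emph{not} known to be invariant under such bounded-degree modifications. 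Nothing in Section~\ref{section:notations} asserts this; the identity \eqref{eq:BSTV} concerns the specific family $\Lcal$, and knowing that $(L_i\cdot K)_i$ satisfies it gives no control on $\liminf_i \log\zeta^\ast_{L_i}(1)/g(L_i)$, which is the quantity one must bound from below.

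Descending from the compositum family back to $\Lcal$ is precisely the content of Theorem~\ref{theo:compositum.finie} (Theorem~\ref{itheo:compositum}(2) of the introduction), one of the genuinely new results of the paper, and it is how Proposition~\ref{prop:Sptour} is actually proved (via Theorem~\ref{thm:pro.plusieurs}): one checks, exactly as you do, that the Galois closure of $K\cdot L_i/K$ is solvable, but then one must transport an exceptional zero $\rho_i$ of $\zeta_{L_i}$ \emph{upwards} to $\zeta_{K'\cdot L_i}$ (Brauer's theorem, using that $K'\cdot L_i/L_i$ is Galois, where $K'$ is the Galois closure of $K/\Q$), then \emph{downwards} to an at most quadratic extension of $K'$ (Stark or Murty), and finally invoke Lemma~\ref{lemm:cle} together with the partition Lemma~\ref{lemm:partition}. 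By treating this step as routine bookkeeping, your proposal omits the analytic heart of the argument; as written it establishes the proposition only for families with $K\subset L_i$ for all $i$.
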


L'int\'er\^et de citer cet exemple tient à ce que ces corps $K_S(p)$ fournissent essentiellement  la seule fa\c{c}on connue de construire explicitement (lorsque $S$ est fini et $[K_S(p):K]$ infini) des tours de corps de nombres ne satisfaisant pas \eqref{ieq:asymp.bad} et donc d'exhiber des situations o\`u $\beta(\Kcal)\neq 1$. Nous rappellerons en partie \ref{sec:exemples} quelques unes de ces constructions dues \`a Hajir, Maire, Tsfasman, Vl\u{a}du\c{t} et Zykin.

\numberwithin{equation}{section}    
\section{D\'efinitions et notations}
\label{section:notations}

\subsection{Genre d'un corps de nombres} 
Si $K$ est un corps de nombres, son \emph{genre}, not\'e $g(K)$, est d\'efini par $g({K})=\log{\sqrt{|\Delta(K)|}}$. Cette d\'efinition est motiv\'ee par l'analogie entre l'arithm\'etique des corps de nombres et celle des corps de fonctions des courbes sur les corps finis. Il est classique (th\'eor\`eme de Minkowski, voir proposition 2.17 au chapitre III de \cite{Neukirch}) que le seul corps de nombres de genre~$0$ est $\Q$. Si $K\neq\Q$, l'in\'egalit\'e de Minkowski s'\'ecrit 
\begin{equation}\label{ineq:minkowski}
[K:\Q]\leq \frac{2\,g(K)}{g(\Q(\sqrt{-3}))} = \frac{4}{\log 3}\, g(K) 
\end{equation}
(l'inégalit\'e usuelle \cite[III, proposition 2.14]{Neukirch} entra\^ine ceci si $[K:\Q]\geq 5$ tandis que, pour les degr\'es $2, 3$ et $4$, les corps de discriminant minimal sont connus \cite[\S2]{Poitou}).

Si $L/K$ est une extension finie de corps de nombres, rappelons que l'on a $g(L)\geq [L:K]\, g(K)$.  Il s'agit d'une version faible de la formule de Riemann--Hurwitz qui exprime  plus g\'en\'eralement  la diff\'erence  $g(L)- [L:K]\, g(K)$ en termes de la ramification de l'extension $L/K$ (voir proposition 3.13 de \cite[III]{Neukirch} pour un \'enonc\'e pr\'ecis dans des notations l\'eg\`erement différentes).  En particulier, nous d\'eduisons de cette formule l'\'egalit\'e $g(L)= [L:K]\, g(K)$ lorsque l'extension $L/K$ est partout non ramifi\'ee.

\subsection{Invariants de Tsfasman--Vl\u{a}du\c{t}}\label{ss:invariants}
Soient $\mathcal{Q}_0\subset\Z$ l'ensemble des puissances des nombres premiers et $\mathcal{Q}$ l'ensemble form\'e de $\mathcal{Q}_0$ ainsi que des deux symboles $\R$ et $\C$.

Soit $K$ un corps de nombres. Pour toute puissance $q\in\mathcal Q_0$  nous notons $\Phi_q(K)$ le nombre de places de~$K$ de norme $q$, $\Phi_\mathbb{R}(K)$  le nombre de places r\'eelles et $\Phi_\mathbb{C}(K)$ le nombre de places complexes de $K$. \\

Consid\'erons \`a pr\'esent une suite $\Kcal=(K_i)_{i\in \N}$ de corps de nombres. Nous dirons que la suite $\Kcal$ est \emph{une famille} si $g(K_i)$ tend vers $+\infty$ lorsque $i\to\infty$ et si $K_i\neq \Q$ pour tout $i$.  Pour tout $q\in\mathcal{Q}$ nous d\'efinissons, si cela a un sens, la limite suivante:
\[ \phi_{q}(\Kcal):=\lim_{i\to\infty}\frac{\Phi_{q}(K_{i})}{g({K_i})}. \]
Une famille $\Kcal=(K_{i})_{i\in\mathbb{N}}$ est dite \emph{asymptotiquement exacte} si  toutes les limites $\phi_{q}(\Kcal)$, pour $q\in\mathcal{Q}$, existent. En pratique, ce n'est pas une condition tr\`es restrictive. En effet, si $\Kcal=(K_{i})_{i\in\mathbb{N}}$  est une \emph{tour}, c'est-\`a-dire si $K_{i}\subset K_{i+1}$ pour tout $i$, alors $\Kcal$ est asymptotiquement exacte (voir \cite[\S2]{TV} o\`u il est de plus d\'emontr\'e que les invariants $\phi_q(\Kcal)$ ne d\'ependent que de l'union $\bigcup_i K_{i}$).  On montre plus g\'en\'eralement que, de toute famille $\Kcal$, on peut extraire une sous-famille asymptotiquement exacte.  Il est clair qu'une sous-famille  $\Kcal'$ d'une famille asymptotiquement exacte $\Kcal$ l'est \'egalement et que ses invariants $\phi_q(\Kcal')$ sont alors \'egaux \`a ceux de $\Kcal$. Nous renvoyons \`a \cite{LebTV} et \cite{LebTVE} pour une \'etude plus pouss\'ee des invariants $\phi_q(\Kcal)$.

Une famille asymptotiquement exacte $\Kcal$ est dite \emph{asymptotiquement mauvaise} si $\phi_q(\Kcal)=0$ pour tout $q\in\mathcal{Q}$, \emph{asymptotiquement bonne} sinon.  En pratique, les familles asymptotiquement mauvaises sont les plus communes. Pour tout corps de nombres $K$ et tout $q\in\mathcal{Q}_0$ il est clair que $\Phi_q(K)\leq [K:\Q] = \Phi_\R(K) + 2\Phi_\C(K)$. Si $(K_i)_{i\in\N}$ est une famille asymptotiquement exacte, il suit que le quotient $[K_i:\Q]/g(K_i)$ admet une limite (finie) et que cette limite est nulle si et seulement si la famille est asymptotiquement mauvaise. Une famille est donc asymptotiquement mauvaise si et seulement si elle satisfait la condition \eqref{ieq:asymp.bad} de l'introduction. \\

Si  $\Kcal$ est une famille asymptotiquement exacte, nous lui associons les deux quantit\'es 
\[ \beta_0(\Kcal) := \sum_{q\in\mathcal{Q}_0}\phi_{q}(\Kcal)\, \log\frac{q}{q-1} 
\quad \text{ et } \quad 
\beta(\Kcal) := 1+\beta_0(\Kcal)-\phi_{\R}(\Kcal)\,\log 2-\phi_{\C}(\Kcal)\,\log 2\pi.\]
La preuve du th\'eor\`eme 7.1 de \cite{TV} assure que la somme d\'efinissant $\beta_0(\Kcal)$ converge. Les th\'eor\`emes~J et L du m\^eme article fournissent des exemples de tours $\Kcal$ telles que $\beta(\Kcal)\neq 1$ tandis que la partie~8 montre que l'on a toujours 
\[2/5 \leq \beta(\Kcal)\leq 6/5\]
(les \'enonc\'es des th\'eor\`emes 8.2 et 8.3 ne pr\'esentent que les cons\'equences pour le quotient de Brauer--Siegel mais la m\'ethode de programmation lin\'eaire employ\'ee concerne bien $\beta(\Kcal)$).

\subsection{Th\'eor\`eme de Brauer--Siegel g\'en\'eralis\'e : architecture des preuves}
Supposons donn\'ee une famille asymptotiquement exacte $\Kcal = (K_i)_i$ de corps de nombres. Rappelons rapidement comment s'articule classiquement la preuve de la conjecture de Tsfasman--Vl\u{a}du\c{t} pour la famille $\Kcal$ dans les cas o\`u celle-ci est connue, tout en donnant une br\`eve esquisse de la preuve du th\'eor\`eme \ref{itheo:principal}. Il s'agit de montrer que la limite
\[\lim_{i\to\infty}\frac{\log\big(h(K_i) \, R(K_i)\big)}{\log\sqrt{|\Delta(K_{i})|}} \]
existe et vaut $\beta(\Kcal)$. Dans les notations introduites plus haut, la formule des classes de Dirichlet (voir \cite[VIII.\,\S2, th\'eor\`eme 5]{LangANT}) entra\^ine 
\begin{equation*}
\frac{ \log \zeta_{K_i}^\ast(1)}{g(K_i)} = \frac{\log\big(h(K_i)\, R(K_i)\big)}{g(K_i)} - 1 + \frac{\Phi_\R(K_i)}{g(K_i)}\,\log 2 
+ \frac{\Phi_\C(K_i)}{g(K_i)}\,\log 2\pi - \frac{\log \mu_{K_i}}{g(K_i)},
\end{equation*}
o\`u $\mu_{K_i}$ d\'esigne le nombre de racines de l'unit\'e contenues dans $K_i$. \`A l'aide de cette relation,  nous nous ramenons ainsi \`a prouver  
\begin{equation*}
\lim_{i\to\infty} \frac{\log\zeta^\ast_{K_i}(1)}{g(K_i)} = \beta_0(\Kcal). 
\end{equation*}
L'in\'egalit\'e de Tsfasman--Vl\u{a}du\c{t} (voir th\'eor\`eme 7.1 de \cite{TV}) donne 
\begin{equation}\label{eq:TV.ineq}
\limsup_{i} \frac{\log\zeta^\ast_{K_i}(1)}{g(K_i)} 
\leq \beta_0(\Kcal) <\infty. 
\end{equation}
Avec cette in\'egalit\'e,  d\'emontrer la conjecture de Tsfasman--Vl\u{a}du\c{t} pour la famille $\Kcal$ revient finalement \`a s'assurer que
\[ \liminf_{i} \frac{\log\zeta^\ast_{K_i}(1)}{g(K_i)}
\geq \beta_0(\Kcal). \] 
En des termes plus vagues, il s'agit d'obtenir une minoration asymptotique  suffisamment pr\'ecise du r\'esidu $\zeta^\ast_{K_i}(1)$ en termes de $g(K_i)$. Une telle minoration s'obtient assez rapidement lorsque la fonction $\zeta_{K_i} $ n'a pas de z\'ero r\'eel \og{}trop proche\fg{} de $1$ (voir lemme \ref{lemm:lemme6}). Dans le cas o\`u un tel z\'ero est pr\'esent, un travail suppl\'ementaire est n\'ecessaire. Notre approche consiste alors \`a utiliser un r\'esultat de \og{}descente de z\'eros\fg{} (th\'eor\`eme \ref{itheo:desc.gael}) qui, sous hypoth\`eses suppl\'ementaires sur $K_i$,   \og{}explique\fg{} la pr\'esence d'un tel z\'ero par l'existence d'un sous-corps quadratique de $K_i$   dont la fonction z\^eta s'annule en ce m\^eme point. Il restera alors \`a appliquer le th\'eor\`eme de Siegel \`a la suite de corps quadratiques ainsi construite.

\section{Th\'eor\`eme de Mertens et th\'eor\`eme de Brauer--Siegel} 

Soit $K$ un corps de nombres diff\'erent de $\Q$. Nous dirons que $K$ admet un \emph{z\'ero exceptionnel} s'il existe un zero r\'eel $\rho$ de sa fonction z\^eta $\zeta_K$ v\'erifiant 
\[ 1 - ({8} g(K))^{-1} \leq \rho < 1. \] 
Si un tel z\'ero existe il est unique et simple d'apr\`es le lemme 3 de \cite{Stark}. Dans la suite de cet article, nous noterons $\rho({K})$ le z\'ero exceptionnel de $\zeta_{K}$ s'il existe  et poserons $\rho(K)=0$ sinon. \\

L'article \cite{Lebacque} d\'emontre la forme effective (et inconditionnelle) suivante du th\'eor\`eme de Mertens.
\begin{theo}\label{theo:mertens}  
Il existe une constante effective $C>0$ telle que, pour tout corps de nombres $K$ et tout $X>1$ v\'erifiant $\log X>C [K:\Q] g(K)^2$,  
\begin{equation}\label{eq:mertens}
\sum_{\substack{q\in\mathcal{Q}_0 \\ q\leq X}}	\Phi_q(K) \, \log\frac{q}{q-1} = \log\log X + \gamma  + \log\zeta_K^\ast(1) + O\left( \frac{1}{(1-\rho(K))\log X} \right),
\end{equation}
o\`u $\gamma$ est la constante d'Euler--Mascheroni. La constante implicite dans le terme d'erreur est absolue et effective.
\end{theo}

Pour le confort de lecture, rappelons comment d\'eduire de ce th\'eor\`eme le lemme ci-dessous (qui est essentiellement le lemme 6 de \cite{Lebacque}). 
\begin{lemm}
\label{lemm:lemme6}
 Soit $\Lcal = (L_i)_{i\in\N}$ une famille asymptotiquement exacte de corps de nombres. Pourvu~que
\[\liminf_{i} \frac{\log(1-\rho(L_{i}))}{g({L_i})} =0, \qquad \text{{\it i.e.} que }\ \limsup_{i} \frac{|\log(1-\rho(L_{i}))|}{g({L_i})} =0, \]
la famille $\Lcal$ v\'erifie le th\'eor\`eme de Brauer--Siegel g\'en\'eralis\'e. 
\end{lemm}

\begin{proof} 
Avec  l'in\'egalit\'e de Tsfasman--Vl\u{a}du\c{t} \eqref{eq:TV.ineq},  il suffit de montrer que $\beta_0(\Lcal)$ v\'erifie
\[\beta_0(\Lcal) \leq \liminf_i \frac{\log\zeta^\ast_{L_i}(1)}{g(L_i)}.\]
Pour all\'eger les notations dans cette preuve, nous noterons $n_i=[L_i:\Q]$, $g_i=g(L_i)$ et $\rho_i=\rho(L_i)$ pour tout $i$. Pour tout $i$, posons $X_i :=  \exp\left(2C\, {n_i g_i^2}/{(1 - \rho_i)}\right)$ et, pour tout $q\in\mathcal{Q}_0$, 
\[f_i(q) :=  \frac{\Phi_q(L_i)}{g_i}\, \log\frac{q}{q-1} \ \text{ si } q\leq X_i \quad \text{ et } f_i(q) := 0 \ \text{ sinon}.\]
Comme $\rho_i\in[0, 1[$, $X_i$ v\'erifie  $\log X_i>C n_ig_i^2$. L'estimation \eqref{eq:mertens} entra\^ine que, pour tout $i\in\N$,
\[ \frac{\log\zeta_{L_i}^\ast(1) }{g_i} \geq  \sum_{q\in\mathcal{Q}_0} f_i(q) -\frac{\log\log X_i}{g_i} - \frac{\gamma}{g_i} - \frac{c_0}{ g_i (1-\rho_i)\log X_i}\]
pour une certaine constante absolue $c_0>0$. Par choix de $X_i$, nous tirons 
\begin{align}\label{eq:demo.lemm6.2} 
\frac{\log\zeta_{L_i}^\ast(1) }{g_i} &\geq \sum_{q\in\mathcal{Q}_0} f_i(q) + \frac{\log(1-\rho_i)}{g_i} - \frac{\gamma+\log(2C)}{g_i} - \frac{c_0 (2C)^{-1}}{n_ig_i^3}  - \frac{\log n_i}{g_i} - \frac{2\log g_i}{g_i}  \notag \\
&\geq \sum_{q\in\mathcal{Q}_0} f_i(q) + \frac{\log(1-\rho_i)}{g_i} - \frac{\gamma+\log(2C) +c_0 (2C)^{-1}+\log (n_i/g_i)}{g_i} - \frac{3\log g_i}{g_i}  \notag \\
&\geq \sum_{q\in\mathcal{Q}_0} f_i(q) -\frac{|\log(1-\rho_i)|}{g_i} - o(1) \qquad \text{lorsque } i\to\infty.
\end{align}
Rappelons en effet que $n_i/g_i$ est born\'e (par in\'egalit\'e de Minkowski \eqref{ineq:minkowski}) et que $(\log g_i)/g_i =o(1)$.

La famille $\Lcal$ \'etant asymptotiquement exacte, il est clair que 
\[ \sum_{q\in\mathcal{Q}_0} \liminf_i f_i(q)  =\sum_{q\in\mathcal{Q}_0} \lim_{i\to\infty} f_i(q)  = \beta_0(\Lcal)<\infty.\] 
Par ailleurs,  nous d\'eduisons du lemme de Fatou que 
\[ \beta_0(\Lcal)  = \sum_{q\in\mathcal{Q}_0} \liminf_i f_i(q) \leq \liminf_i \sum_{q\in\mathcal{Q}_0}  f_i(q).\]
En prenant les limites inf\'erieures de part et d'autre de \eqref{eq:demo.lemm6.2} et en reportant l'in\'egalit\'e ci-dessus dans la relation obtenue, nous obtenons 
\begin{equation*}
\liminf_i \frac{\log\zeta_{L_i}^\ast(1) }{g(L_i)} \geq  \beta_0(\Lcal) - \limsup_i\frac{|\log(1-\rho_i)|}{g_{i}} + 0.
\end{equation*}
L'hypoth\`ese assure alors que  $\ds \liminf_i \frac{\log\zeta_{L_i}^\ast(1) }{g(L_i)}  \geq \beta_0(\Lcal)$, ce qu'il fallait d\'emontrer. 
\end{proof}

On d\'eduit imm\'ediatement du lemme \ref{lemm:lemme6} ci-dessus le cas particulier important (et bien connu, voir th\'eor\`eme 7.2 et remarque 7.2 de \cite{TV}) suivant: {\it une famille asymptotiquement exacte de corps de nombres qui n'ont pas de z\'ero exceptionnel 
 v\'erifie le th\'eor\`eme de Brauer--Siegel g\'en\'eralis\'e.} Ainsi la conjecture de Tsfasman--Vl\u{a}du\c{t} suit de l'hypoth\`ese de Riemann g\'en\'eralis\'ee (condition \refcond{c:GRH} dans l'introduction).

\section{Th\'eor\`eme de descente de z\'eros}

\subsection{Complexit\'es galoisiennes} 
Si $L/K$ est une extension de corps de nombres et $M/K$ sa cl\^oture galoisienne, nous notons $[L:K]_{\Gal} := [M:K]$ et $\clgal_{L/K}$ le plus petit entier $n\geq 1$ tel que $M$ est le compositum de $n$ conjugu\'es de $L$ sur $K$. Nous posons aussi $[L:K]'_{\Gal} := \clgal_{L/K}\, [M:L]$. 	

Reprenons \`a pr\'esent la d\'efinition de la complexit\'e galoisienne donn\'ee dans l'introduction en lui adjoignant deux variantes.
\begin{defi}
Pour une extension $L/K$ de corps de nombres, nous notons 
\begin{align*}
\comp_{L/K} & := \min\left\{ \max_{1\leq i \leq s} [L_i:L_{i-1}]_{\Gal}, \ K=L_0 \subset L_1 \subset \dots \subset L_s=L\right\}, \\
	\comp'_{L/K} & 	:= \min\left\{ \max_{1\leq i \leq s} [L_i:L_{i-1}]'_{\Gal}, \ K=L_0 \subset L_1 \subset \dots \subset L_s=L\right\} \\
	\text{et }\quad 	
	\comp''_{L/K} & 	:= \min\left\{ \max_{1\leq i \leq s} [L_i:L_{i-1}]'_{\Gal}\, \min\big\{1, 2\, [L:L_i]^{-1}\big\}, \ K=L_0 \subset L_1 \subset \dots \subset L_s=L\right\}. 
\end{align*}
\end{defi}

Voici quelques propri\'et\'es imm\'ediates de ces invariants.
\begin{lemm}\label{lemm:comp.gal.relations}
Soit $L/K$ une extension  de corps de nombres.
\begin{enumerate}[(i)]
	\item 	$\displaystyle 1\leq \comp''_{L/K}\leq \comp'_{L/K}\leq \comp_{L/K} \leq [L:K]_{\Gal} \leq \frac{[L:K]!}{([L:K]-\clgal_{L/K})!} \leq [L:K]!$.
	\item 	$\displaystyle \comp_{L/K} \leq [L:K]\,\comp''_{L/K}$.
	\item 	$L/K$ est galoisienne si et seulement si $\clgal_{L/K}=1$ si et seulement si $[L:K]'_{\Gal} = 1$.  
	\item 	$L/K$ est galoisienne par pas si et seulement si $\comp'_{L/K}=1$, auquel cas nous avons $\comp_{L/K}\leq [L:K]$.
	\item\label{ineq:comp.sousext} 	Si $N$ est un corps de nombres contenant $L$ alors $\comp_{N/K}\leq \max\{\comp_{N/L},\, \comp_{L/K}\}$ et de m\^eme pour $\comp'$ et~$\comp''$.
\end{enumerate}
\end{lemm}

Voyons ensuite que la complexit\'e galoisienne n'est pas trop grande en pr\'esence d'une extension  r\'esoluble.
\begin{lemm}\label{lemm:compl.gal.clotgalres} 
Soient $K\subset L \subset N$ et $K\subset M \subset N$ des extensions de corps de nombres telles que $M/K$ et $N/K$ sont galoisiennes et $N/M$ est r\'esoluble.  Si $\comp_{L/K} > [M:K]$,  nous avons  
\begin{enumerate}[(i)]
	\item\label{lemm:cg.i}		$\displaystyle \comp'_{L/K} \leq \max_{p \text{ premier}}\left\{ (v_p([L:K])+1)\, p^{v_p([L:K])^2} \right\}$,		 
	\item\label{lemm:cg.ii}		$\displaystyle \comp_{L/K} \leq \max_{p \text{ premier}}\left\{ p^{v_p([L:K])^2+v_p([L:K])}\right\}$ 				 
		et
	\item\label{lemm:cg.iii} 	$\log\comp_{L/K} \leq 3\, (\log [L:K])^2$.
\end{enumerate}	
\end{lemm}

\begin{proof}
Commen\c{c}ons par \'etablir l'\'enonc\'e dans le cas particulier o\`u l'extension $L/K$ n'admet aucun sous-corps interm\'ediaire. Il n'y a pas de restriction \`a supposer en outre que $N/K$ est la cl\^oture galoisienne de $L\cdot M /K$. Puisque 
\[[M:K]<  \comp_{L/K} \leq [L:K]_{\Gal} \leq [N:K],\]
nous avons $M\neq N$. Notre argument s'inspire alors de celui de \cite[p. 517]{MurtyMRL},  lui-m\^eme adapt\'e des preuves des th\'eor\`eme 3 et lemme 2 de \cite{OS93}. Notons $G = \Gal(N/K)$, $H=\Gal(N/L)$ et \linebreak $\Gamma = \Gal(N/M)$.  Nos hypoth\`eses se traduisent par les faits suivants : $\Gamma$ est un sous-groupe distingu\'e de $G$,  $\Gamma$ est un groupe r\'esoluble non trivial,  il n'y a pas de sous-groupe interm\'ediaire entre $H$ et $G$ et le seul sous-groupe distingu\'e de $G$ contenu dans $\Gamma\cap H$ est  trivial. Consid\'erons l'ensemble $\mathscr E$ des sous-groupes non triviaux de $\Gamma$ qui sont distingu\'es dans $G$. Cet ensemble est non vide car $\Gamma\in\mathscr E$.  Fixons un \'el\'ement $A\in\mathscr E$ minimal pour l'inclusion. Tout sous-groupe caract\'eristique de $A$ est distingu\'e dans~$G$. Par minimalit\'e, le seul sous-groupe caract\'eristique non trivial de $A$ est $A$ lui-m\^eme. Puisque $A$ est r\'esoluble (comme sous-groupe de $\Gamma$), ceci entra\^ine que $A$ est ab\'elien \'el\'ementaire  c'est-\`a-dire isomorphe \`a $(\Z/p\Z)^r$ pour un nombre premier $p$ et $r\geq 1$.

Montrons que $G$ s'\'ecrit comme le produit semi-direct $G=A\rtimes H$. Tout d'abord notons que $A\not\subset H$ car sinon $A$ serait un sous-groupe non trivial de $\Gamma\cap H$ distingu\'e dans $G$. Le sous-ensemble $A\cdot H$ est un sous-groupe de $G$ (car $A$ est distingu\'e) qui contient donc strictement $H$. Nous avons ainsi $G=A\cdot H$.  D'autre part, l'intersection $A\cap H$ est un sous-groupe distingu\'e \`a la fois de $H$ (puisque $A$ est distingu\'e dans $G$) et de $A$ (puisque $A$ est ab\'elien)  donc de $A\cdot H = G$. L'inclusion $A\cap H \subset \Gamma\cap H$ force ainsi $A\cap H$ \`a \^etre trivial  et nous avons bien $G = A \rtimes H$. En particulier  l'ordre $p^r$ de $A$ s'interpr\`ete  comme l'indice de $H$ dans $G$, c'est-\`a-dire comme le degr\'e de $L/K$.

Notons $\varphi : H\to \mathrm{Aut}(A)$ l'action de $H$ sur $A$. Pour tout $a\in A$, l'\'ecriture de $G$ comme produit semi-direct montre que $H \cap a^{-1}Ha = \{h\in H \mid \varphi(h)(a)=a\}$. Par suite, si $(a_1, \dots, a_r)$ d\'esigne une base de $A$, 
\[ H\cap \bigcap_{i=1}^r a_i^{-1}Ha_i = \ker\varphi.\] 
Ce noyau est un sous-groupe distingu\'e de $H$ contenu dans le centralisateur de $A$ donc il est  distingu\'e dans $A\cdot H = G$. Par correspondance de Galois, le corps $N^{\ker\varphi}$ est une extension galoisienne de $K$ qui s'\'ecrit comme compositum de $r+1$ conjugu\'es de $L$ au-dessus de $K$. En particulier $N^{\ker\varphi}$ n'est autre que la cl\^oture galoisienne de $L/K$ et nous avons $\clgal_{L/K}\leq r+1$. Nous en d\'eduisons 
\[ \comp'_{L/K} = \clgal_{L/K}\, [N^{\ker\varphi}:L ]  = \clgal_{L/K}\, [H:\ker\varphi] = \clgal_{L/K}\, |\mathrm{Im}\, \varphi|\]
et, de m\^eme, 
$\comp_{L/K} = [L:K]\, |\im\varphi|$. Puisque $\im\varphi$ est un sous-groupe de $\mathrm{Aut}(A)\simeq\mathrm{GL}_r(\Z/p\Z)$, son cardinal est au plus $p^{r^2}$. Il vient $\comp'_{L/K}\leq (r+1)\, p^{r^2}$, $\comp_{L/K}\leq p^{r^2+r}$ et 
\[ \log\comp_{L/K}\leq (r^2+r)\,\log p \leq 2 r^2\,\log p \leq 3r^2\, (\log p)^2 = 3\,(\log [L:K])^2.\]
Le lemme est donc d\'emontr\'e lorsque $L/K$ n'a pas d'extension interm\'ediaire.

Pour passer au cas g\'en\'eral il suffit de choisir une suite $K=L_0 \subset L_1 \subset \dots \subset L_s= L$ o\`u aucune des extensions $L_i/L_{i-1}$ n'admet d'extension interm\'ediaire, en remarquant que $L_{i-1}\subset L_i \subset N$ et $L_{i-1}\subset L_i \cdot M \subset N$ satisfont les hypoth\`eses galoisiennes de l'\'enonc\'e avec $[L_i\cdot M : L_{i-1}] \leq [M:K]$. 
\end{proof}

\subsection{Genre d'un compositum}
Rappelons d'abord un \'enonc\'e classique.
\begin{lemm}\label{lemm:genre.compositum}
Soient $L_1, \dots, L_n$ des corps de nombres et $M$ leur compositum. Alors
\begin{equation*}
	\frac{g(M)}{[M:\Q]} \leq \sum_{i=1}^n \frac{g(L_i)}{[L_i:\Q]}.
\end{equation*}
\end{lemm}

\begin{proof}
	Ceci d\'ecoule directement de la relation de divisibilit\'e entre discriminants donn\'ee par le lemme 7 de \cite{Stark}.
\end{proof}

Nous utiliserons la cons\'equence suivante.
\begin{lemm}\label{lemm:inegalite.genre}
Soient $K \subset L\subset M$ des corps de nombres imbriqu\'es et $N/K$ la cl\^oture galoisienne de $L/K$. Alors 
\[g(M\cdot N) \leq [L:K]'_{\Gal}\, g(M).\]
\end{lemm}

\begin{proof}
Par d\'efinition, $N$ est le compositum de $\clgal_{L/K}$ conjugu\'es de $L$ au-dessus de $K$. Par suite, $M\cdot N$ est le compositum de $M$ et de $\clgal_{L/K}-1$ de ces conjugu\'es (qui ont tous le m\^eme genre). Le lemme pr\'ec\'edent donne donc
\[\frac{g(M\cdot N)}{[M\cdot N : \Q]} \leq \big(\clgal_{L/K} - 1\big)\, \frac{g(L)}{[L:\Q]} + \frac{g(M)}{[M:\Q]}.\]
Puisque $[M:L]\, g(L) \leq g(M)$, il vient  
\[g(M\cdot N)\leq \clgal_{L/K}\, [M\cdot N : M]\, g(M) \leq \clgal_{L/K}\, [N:L] \, g(M).\]
C'est le r\'esultat souhait\'e, par d\'efinition de $[L:K]'_{\Gal}$.
\end{proof}

\subsection{Extensions quadratiques}
Avant de passer \`a la descente, \'etablissons un dernier lemme pr\'eliminaire (utilisant d\'ej\`a les r\'esultats de Brauer et Stark qui seront cruciaux pour la d\'emonstration du th\'eor\`eme \ref{theo:desc.gael} ci-dessous).
\begin{lemm}\label{lemm:desc.multiquad}
Soient $L_1, \dots, L_n$ des extensions au plus quadratiques d'un corps de nombres $K$ et $\rho$ un r\'eel  tel que 
\[1 - \Big(32\, \max_{1\leq i\leq n} g(L_i)\Big)^{-1} \leq \rho < 1.\]
Si $\zeta_{L_i}(\rho)=0$ pour tout $1\leq i \leq n$	alors $\zeta_{L_1\cap \dots \cap L_n}(\rho) = 0$.
\end{lemm}

\begin{proof}
Si $L_1\cap \dots\cap L_n$ est \'egal \`a l'un des $L_i$ alors la conclusion est claire. Sinon il existe deux indices $1\leq i <j\leq n$ avec $L_i\neq L_j$ et $[L_i:K]=[L_j:K]=2$. Le compositum $M = L_i\cdot L_j$ est alors une extension biquadratique (donc galoisienne) de $K$  dont le genre v\'erifie  $g(M)\leq 2(g(L_i) + g(L_j))$ d'apr\`es le lemme \ref{lemm:genre.compositum}.
	
Le th\'eor\`eme 1 de \cite{Brauer47} dans l'extension $M/L_i$ assure que $\rho$ est un z\'ero de $\zeta_M$.  Comme $\rho$ appartient \`a l'intervalle $\big[1-(8\, g(M))^{-1} , 1\big[$,  c'est un z\'ero simple de $\zeta_M$. Nous appliquons alors le th\'eor\`eme 3 de \cite{Stark} \`a l'extension $M/K$ et constatons que son sous-corps minimal dont la fonction z\^eta s'annule en $\rho$, puisqu'il doit \^etre contenu dans $L_i$ et $L_j$, ne peut \^etre que $K = L_1\cap \dots \cap L_n$.
\end{proof}

\subsection{Descente de zéros}
Nous d\'emontrons ici une version renforc\'ee (puisque $\comp''_{L/K}\leq \comp_{L/K}$) du th\'eor\`eme \ref{itheo:desc.gael} de l'introduction.
\begin{theo}\label{theo:desc.gael}
Soit $L/K$ une extension de corps de nombres. Si la fonction z\^eta de $L$ admet un z\'ero r\'eel $\rho$ v\'erifiant
\[ 1 - \left(32\, g(L)\, \comp''_{L/K}\right)^{-1} \leq \rho <1,\]
alors il existe une sous-extension $F$ de $L/K$ avec $[F:K]\leq 2$ et $\zeta_F(\rho)=0$.
\end{theo}

\begin{proof}
Nous raisonnons par r\'ecurrence sur le degr\'e de l'extension $L/K$. Si ce dernier est $1$ ou $2$, il n'y a rien \`a faire : le choix $F=L$ convient. Supposons donc $[L:K]\geq 3$ et le r\'esultat vrai pour toute extension  de degr\'e au plus $[L:K]-1$. Par d\'efinition de la complexit\'e galoisienne $\comp''_{L/K}$, nous pouvons choisir une suite de sous-extensions 
\[K = L_0 \subset L_1\subset L_2\subset\dots\subset L_s = L\]
de sorte que $\comp''_{L/K} = \displaystyle\max_{1\leq i \leq s}\left\{ [L_i:L_{i-1}]'_{\Gal} \, \min\big(1, 2\, [L:L_i]^{-1}\big)\right\}$.  Il n'y a pas de restriction \`a supposer  $[L_i:L_{i-1}]\geq 2$ pour $1\leq i\leq s$. Posons $s'=s$ si $[L:L_{s-1}]\geq 3$ et $s'=s-1$ si $[L:L_{s-1}]=2$.   Notons alors $K'=L_{s'-1}$ et $L'=L_{s'}$. Le choix de $s'$ montre que nous avons $K \subset K'\subset L' \subset L$, 
\[[L:L'] \leq 2, \quad [L:K']> 2 \quad \text{ et } \quad [L':K']'_{\Gal} \leq \comp''_{L/K}.\]
D\'esignons \`a pr\'esent par $M'/K'$ la cl\^oture galoisienne de $L'/K'$.  D'apr\`es le lemme \ref{lemm:inegalite.genre}, nous avons
\begin{equation*}
	g(L\cdot M')\leq [L':K']'_{\Gal} \, g(L) \leq \comp''_{L/K}\, g(L).
\end{equation*}
Ce compositum $L\cdot M'$ est une extension au plus quadratique de $M'$ tandis que c'est une extension galoisienne de $L$. Le th\'eor\`eme 1 de \cite{Brauer47} assure   $\zeta_{L\cdot M'}(\rho) =0$. Tout comme la borne  sur le genre ci-dessus, cette annulation vaut \'egalement pour chacun des conjugu\'es de $L\cdot M'$ sur $K'$. Par cons\'equent, si nous désignons par $N$ l'intersection de tous ces conjugu\'es (qui est une extension galoisienne de $K'$), le lemme \ref{lemm:desc.multiquad} assure   $\zeta_N(\rho) =0$.

Puisque $N$ est un sous-corps de $L\cdot M'$, nous avons $g(N)\leq \comp''_{L/K}\, g(L)$ donc $\rho > 1 - (32\, g(N))^{-1}$ et, en particulier, $\rho$ est un z\'ero simple de $\zeta_N$. Le th\'eor\`eme 3 de \cite{Stark} montre alors qu'il existe une sous-extension $F'/K'$ de $N/K'$ telle que $[F':K']\leq 2$, $\zeta_{F'}(\rho)=0$ et qui est minimale pour cette annulation au sens o\`u, pour toute sous-extension $N'/K'$ de $N/K'$, on a 
\[F'\subset N' \ \iff \ \zeta_{N'}(\rho)=0.\] 
En particulier, la fonction z\^eta du compositum $F'\cdot L'$ s'annule en $\rho$. Ce compositum $F'\cdot L'$ est une extension au plus quadratique de $L'$ et (comme sous-corps de $L\cdot M'$) son genre est au plus $\comp''_{L/K}\, g(L)$. Ces trois propri\'et\'es sont partag\'ees par $L$ donc le lemme \ref{lemm:desc.multiquad} entra\^ine $\zeta_{L\cap F'\cdot L'}(\rho)=0$. Nous en d\'eduisons   $F'\subset L\cap  F'\cdot L'  \subset L$.

\begin{center}
\begin{tikzpicture}
    \node (Q1) at (0,0) {$K'$};
    \node (Q2) at (-2,1) {$L'$};
    \node (Q3) at (2,1) {$F'$};
    \node (Q4) at (0,2) {$F'\cdot L'$};        
    \node (Q5) at (-4,2) {$L$};
    \node (Q6) at (-2,3) {$N$};
	\node (Q7) at (-4,4) {$L\cdot M'$};

    \draw (Q1)--(Q2) ;
    \draw (Q1)--(Q3) [style= very thick] node [pos=0.5, below, inner sep=1mm, rotate=25]  {\footnotesize $\leq 2$};
    \draw (Q2)--(Q4) [style= very thick] node [pos=0.6, below, inner sep=1mm, rotate=25]  {\footnotesize $\leq 2$};
    \draw (Q2)--(Q5) [style= very thick] node [pos=0.5, below, inner sep=1mm, rotate=-30]  {\footnotesize $\leq 2$};
    \draw (Q3)--(Q4) ; 
    \draw (Q4)--(Q6); 
    \draw (Q5)--(Q7); 
    \draw (Q6)--(Q7); 
    \end{tikzpicture}
\end{center}
Cette inclusion fournit $g(F')\leq g(L) \, [L:F']^{-1}$. D'un autre c\^ot\'e, la suite 
\[K=L_0\subset L_1\subset \dots \subset L_{s'-1}=K'\subset F'\]
montre   $\comp''_{F'/K}\leq [L:F']\, \comp''_{L/K}$ car $[F':K']'_{\Gal} = 1$ et, si $1\leq i\leq s'-1$, 
\[[L_i:L_{i-1}]'_{\Gal}\, \min\left\{1, \frac{2}{[F':L_i]}\right\}
\leq [L_i:L_{i-1}]'_{\Gal}\, \min\left\{1, \frac{2}{[L:L_i]}\right\}\, [L:F']
\leq [L:F']\, \comp''_{L/K}.\]
Nous avons ainsi $[F':K]\leq 2[K':K]<[L:K]$, $\zeta_{F'}(\rho)= 0$ et $\comp''_{F'/K}\, g(F')\leq \comp''_{L/K}\, g(L)$.

L'hypoth\`ese de r\'ecurrence montre donc bien l'existence d'un corps $F$ avec $K\subset F\subset F'\subset L$, $[F:K]\leq 2$ et $\zeta_F(\rho)=0$.
\end{proof}

\subsection{Commentaires}\label{ss:commentaires}
Dans la partie suivante, nous n'utiliserons le th\'eor\`eme \ref{theo:desc.gael} que lorsque ${K=\Q}$. Dans ce cas sa conclusion est renforc\'ee en \og{}$[F:\Q]=2$\fg{} puisque la fonction z\^eta de Riemann $\zeta_\Q$ ne s'annule  pas sur l'intervalle $[0,1[$.

Notre r\'esultat \og{}explique\fg{} par l'usage de la complexit\'e galoisienne les r\'esultats ant\'erieurs de \og{}descente de z\'eros\fg{} dus \`a Stark et Murty. En effet, si nous majorons $\comp''_{K/\Q}$ par $[K:\Q]!$, nous voyons appara\^itre le lemme 8 de \cite{Stark} (\`a un facteur $4$ pr\`es);  si nous remarquons que, si $K$ est un corps CM  de degr\'e $2n$, nous avons $\comp''_{K/\Q}\leq n!$, nous sommes dans le cadre du lemme 9 de ce m\^eme article  tandis que l'\'egalit\'e $\comp''_{K/\Q}=1$ pour une extension galoisienne par pas correspond \`a son lemme 10. Dans les deux cas, nous pourrions aussi retrouver la pr\'ecision sur le caract\`ere imaginaire de $F$ en reprenant notre d\'emonstration (qui se r\'eduit \`a celle de Stark dans ces situations particuli\`eres).

Dans le cas d'une extension $L/K$ dont la cl\^oture galoisienne est r\'esoluble, la majoration \itref{lemm:cg.i} du lemme \ref{lemm:compl.gal.clotgalres} montre que notre r\'esultat est plus pr\'ecis que le  th\'eor\`eme 2.1 de \cite{Murty01} puisque $\comp''_{L/K}$ y est remplac\'e par 
\[c\, (12[L:K])^{\max_p v_p([L:K])}\, (1+{\max_p v_p([L:K])})^2,\]
o\`u $c$ est une constante non pr\'ecis\'ee (et bien entendu nous aurions pu autoriser comme Murty une partie imaginaire \`a $\rho$ puisque le lemme 3 de \cite{Stark} entra\^ine imm\'ediatement que $\rho$ est r\'eel). Les m\^emes remarques s'appliquent au th\'eor\`eme 2.2 de \cite{Wong22}, combinaison des \'enonc\'es de Stark et Murty, dans lequel $\comp''_{L/\Q}\leq \comp''_{L/K}$ est \'egalement major\'e par notre lemme \ref{lemm:compl.gal.clotgalres} (et nous lisons plut\^ot $|\mathrm{Im}(s)|$ que $|\mathrm{Re}(s)|$ dans les domaines (2.1), (2.3) et (2.4) de \cite{Wong22} afin d'\'eviter qu'ils ne soient vides).

Enfin, en utilisant notre th\'eor\`eme \ref{theo:desc.gael} en lieu et place de ses lemmes 8 et 10 dans la d\'emonstration du th\'eor\`eme 1 de \cite{Stark}, nous en obtenons directement une version raffin\'ee o\`u $g(n)$ est remplac\'e par la quantit\'e plus petite $\comp''_{k/\Q}$.  La m\^eme chose vaut pour le th\'eor\`eme 2 car dans sa d\'emonstration la pr\'ecision que $F$ est imaginaire dans les lemmes 9 et 10 n'est en fait pas utilis\'ee.

\section{D\'emonstration du th\'eor\`eme principal}

Commen\c{c}ons par introduire la notion commode suivante :
\begin{defi}\label{def:CGM'}
Une famille $(K_i)_{i\in\N}$ de corps de nombres est dite \emph{de complexit\'e galoisienne mod\'er\'ee} si le quotient $(\log\comp_{K_i/\Q})/g(K_i)$ tend vers $0$ lorsque $i$ tend vers $+\infty$.
\end{defi}
Au vu des in\'egalit\'es entre ces quantit\'es, remplacer $\comp$ par $\comp'$ ou $\comp''$ ne changerait pas la d\'efinition.
\\

Nous d\'emontrons dans cette partie l'\'enonc\'e 
annonc\'e comme th\'eor\`eme \ref{itheo:principal} dans l'introduction : 
\begin{theo}\label{theo:principal}
Toute famille asymptotiquement exacte de corps de nombres de complexit\'e galoisienne mod\'er\'ee v\'erifie le th\'eor\`eme de Brauer--Siegel g\'en\'eralis\'e. 
\end{theo}

\begin{proof} 
Soit $(K_i)_{i\in\N}$ une telle famille. Pour chaque entier $i\geq 0$  posons $c_i := 32\, g(K_i)\, \comp_{K_i/\Q}$ et distinguons  deux cas : {\it ou bien la fonction z\^eta de $K_i$ ne s'annule pas sur l'intervalle $[1 - c_i^{-1}, 1[$}, auquel cas nous posons $F_i=\Q(\sqrt{-3})$ (ainsi $g(F_i)\leq g(K_i)$ par \eqref{ineq:minkowski});  {\it ou bien la fonction z\^eta de $K_i$ admet un z\'ero $\rho$ dans l'intervalle $[1 - c_i^{-1}, 1[$} et nous construisons un corps quadratique $F_i$ comme suit. Le th\'eor\`eme \ref{theo:desc.gael} de descente de z\'eros implique l'existence d'un corps quadratique $F_i/\Q$ contenu dans~$K_i$ tel  que $\zeta_{F_i}(\rho)=0$.  Comme $F_i$ est contenu dans $K_i$, nous avons $g(F_i) \leq {g(K_i)}/{[K_i:F_i]} \leq g(K_i)$. Dans ce cas $\rho$ est le z\'ero exceptionnel de $K_i$ et de $F_i$. Dans les deux cas, nous avons
\begin{equation}\label{eq:principal.1}
	\frac{\left|\log(1-\rho(K_i))\right|}{g(K_i)} 
\leq  \max\left\{\frac{\log c_i}{g(K_i)}, \frac{|\log (1-\rho(F_i))|}{g(K_i)}\right\}.
\end{equation} 
L'hypoth\`ese de complexit\'e galoisienne mod\'er\'ee faite sur $\Kcal$ implique que $(\log c_i)/g(K_i)$ tend vers $0$ lorsque $i$ tend vers $+\infty$. Montrons \`a pr\'esent que le terme ${|\log (1-\rho(F_i))|}/{g(K_i)}$ au second membre de \eqref{eq:principal.1} est aussi de limite nulle.  Comme l'a remarqu\'e Walfisz (voir \cite{Walfisz36}) le th\'eor\`eme  de Siegel sur les corps quadratiques (voir \cite{Siegel35}) assure que, pour tout $\varepsilon>0$, il existe une constante $c(\varepsilon)>0$ telle que la fonction $\zeta_{F_i}$ ne s'annule pas sur l'intervalle
$\big[1-c(\varepsilon)\,|\Delta(F_i)|^{-\varepsilon}, 1\big[$. Par d\'efinition de $\rho(\cdot)$, on en d\'eduit l'existence d'une constante $A_{\varepsilon}\in\R$ telle que, pour tout $i\in\N$, $\left|\log(1-\rho(F_i))\right|\leq \varepsilon\,  g(F_i) + A_\varepsilon$. Pour tout $i\in\N$, nous avons ainsi
\[\frac{|\log(1-\rho(F_i))|}{g(K_i)} \leq \varepsilon \, \frac{g(F_i)}{g(K_i)} + \frac{A_\varepsilon}{g(K_i)} \leq \varepsilon  + \frac{A_\varepsilon}{g(K_i)}.\]
Puisque $g(K_i)$ tend vers $+\infty$ avec $i$, nous concluons que la quantit\'e $|\log(1-\rho(F_i))|/g(K_i)$ tend  vers~$0$. En reportant dans \eqref{eq:principal.1} les majorations obtenues,  il vient 
\[\lim_{i\to\infty} \frac{\left|\log(1-\rho(K_i))\right|}{g(K_i)} =0.\]
Le lemme \ref{lemm:lemme6} permet \`a pr\'esent de conclure.
\end{proof}

\begin{rema}
Voici un exemple de famille de corps de nombres qui n'est pas de complexit\'e galoisienne mod\'er\'ee. Notons $(p_i)_{i\geq 1}$ la suite croissante des nombres premiers et, pour chaque entier $i\geq 1$, $K_i$ le corps de rupture du polyn\^ome $X^{p_i}-X-1 \in\Z[X]$. L'extension $K_i/\Q$ est de degr\'e $p_i$  et sa cl\^oture galoisienne a pour groupe de Galois $\mathfrak{S}_{p_i}$ (voir les th\'eor\`eme 1 de \cite{Selmer56} et corollaire 3 de \cite{Osada87}). Puisque $K_i/\Q$ n'a pas de sous-corps interm\'ediaire, nous avons \[ \comp_{K_i/\Q} = [K_i:\Q]_{\Gal} = p_i!\] pour tout $i\geq 1$. D'autre part, les calculs sous le lemme 2 dans \cite{Osada87} impliquent que $|\Delta(K_i)|\sim {p_i}^{p_i}$ si bien que $g(K_i) \sim \frac{p_i}{2} \log p_i$ lorsque $i$ tend vers $+\infty$. Par cons\'equent le quotient $(\log \comp_{K_i/\Q})/g(K_i)$ tend vers $1/2$ lorsque $i$ tend vers $+\infty$.  

Nous ne connaissons pas d'exemple de tour de corps de nombres qui ne soit pas de complexit\'e galoisienne mod\'er\'ee.
\end{rema}

\section{Applications et exemples}\label{sec:exemples}

Dans cette partie, nous montrons que certaines des familles de corps de nombres apparaissant naturellement dans la th\'eorie asymptotique des corps globaux v\'erifient inconditionnellement le th\'eor\`eme de Brauer--Siegel g\'en\'eralis\'e.

\subsection{Extensions pro-r\'esolubles} 
Pour un corps de nombres $K$, on note $K\solv$ le compositum de toutes les extensions galoisiennes finies de $K$ dont le groupe de Galois  est r\'esoluble. 
\begin{theo}\label{pro.res} 
Soit $K$ un corps de nombres.  Toute famille asymptotiquement exacte de corps de nombres contenus dans $K\solv$ v\'erifie le th\'eor\`eme de Brauer--Siegel g\'en\'eralis\'e.
\end{theo}

\`A notre connaissance, ce r\'esultat est nouveau hormis dans le cas o\`u $K=\Q$  et seulement pour des familles asymptotiquement mauvaises ou des tours (voir \cite{Dixit}).

\begin{proof} 
Soit $\Lcal=(L_{i})_{i\in\N}$ une famille asymptotiquement exacte de corps de nombres contenus dans $K\solv$. Quitte \`a remplacer $K$ par sa cl\^oture galoisienne sur $\Q$ (ce qui ne fait qu'affaiblir l'hypoth\`ese sur $\Lcal$), nous pouvons supposer   l'extension $K/\Q$   galoisienne. Pour tout $i\in\N$, notons $N_i$ la cl\^oture galoisienne de l'extension $K\cdot L_i/K$. L'extension $N_i/K$ est alors r\'esoluble. Nous pouvons appliquer le lemme \ref{lemm:compl.gal.clotgalres} aux extensions $\Q\subset L_i\subset N_i$ et $\Q\subset K \subset N_i$. Celui-ci fournit la majoration :
\[\log \comp_{L_i/\Q} \leq \max\big\{ [K:\Q], 3\,(\log [L_i:\Q])^2\big\}.\]
Il est alors clair que le quotient $(\log \comp_{L_i/\Q})/g(L_i)$ tend vers $0$ lorsque $i$ tend vers $+\infty$. La famille $\Lcal$ est donc  de complexit\'e galoisienne mod\'er\'ee ce qui, avec le th\'eor\`eme \ref{theo:principal}, ach\`eve la preuve.
\end{proof}

En guise d'illustration du r\'esultat ci-dessus, proposons l'exemple d'application suivant. \'Etant donn\'e un corps de nombres $K$, notons $(H_i)_{i\in\N}$ la {\it tour de corps de Hilbert construite sur $K$} d\'efinie par r\'ecurrence par $H_0=K$ et, pour tout entier $i\geq 0$, $H_{i+1}$ est le corps de Hilbert de $H_i$. Nous posons alors $\Hcacal_K := \bigcup_{i\geq 0} H_i$.
\begin{coro}\label{coro:corps.hilbert}
Soit $K$ un corps de nombres. 
Toute famille asymptotiquement exacte de corps de nombres contenus dans $\Hcacal_K$
v\'erifie le th\'eor\`eme de Brauer--Siegel g\'en\'eralis\'e.
\end{coro}

\begin{proof}
L'extension $\Hcacal_K/K$ est pro-r\'esoluble (et non ramifi\'ee). Il suffit donc d'appliquer le th\'eor\`eme ci-dessus. 
\end{proof}

\subsection{Extensions $K_S(p)$}\label{ssec:KSp}  
Soient $K$ un corps de nombres,  $p$ un nombre premier et $S$ un ensemble de places de $K$. On consid\`ere  la pro-$p$-extension maximale $K_S(p)$ de $K$ qui est non ramifi\'ee hors de~$S$. Lorsque $S$ est fini et que l'extension $K_S(p)/K$ est infinie, on dit que le corps $K$ \emph{admet une $S$-$p$-tour de corps de classes infinie} (la seule fa\c{c}on connue de v\'erifier que la pro-$p$-extension $K_S(p)/K$ est infinie est le crit\`ere de Golod--Shafarevich, voir th\'eor\`eme 3 de \cite{HM1}). Les exemples d'extensions infinies asymptotiquement bonnes d'un corps de nombres donn\'e sont rares. Ceux-ci sont essentiellement construits \`a partir de sous-extensions de $K_{S}(p)$ pour un corps de nombres $K$ admettant une $S$-$p$-tour de classes infinie. 

Nous obtenons une version plus g\'en\'erale de la proposition \ref{prop:Sptour}, comme suit:

\begin{coro}\label{coro:pro.plusieurs} 
Soient $K_1, \dots, K_r$ des corps de nombres. Pour tout $j\in\{1, \dots, r\}$, nous fixons un nombre premier $p_j$ et un ensemble $S_j$ de places de $K_j$ et nous notons $\mathcal{K}$ le compositum 
\[\mathcal{K} := K_{1, S_1}(p_1)\cdot K_{2, S_2}(p_2)\cdot\ldots\cdot K_{r, S_r}(p_r).\] 
Supposons que $\mathcal K$ est de degr\'e infini sur $\Q$. Toute famille asymptotiquement exacte de corps de nombres contenus dans~$\mathcal{K}$ v\'erifie le th\'eor\`eme de Brauer--Siegel g\'en\'eralis\'e.

Si de plus, pour tout $j$, l'ensemble $S_j$ est fini et les places de $K_j$ contenues dans $S_{j}$ ne divisent pas $p_{j}$, une telle famille est asymptotiquement bonne.  
\end{coro}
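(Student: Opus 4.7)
J'appliquerais directement le Th\'eor\`eme \ref{thm:pro.plusieurs} pour la premi\`ere assertion, en posant $\mathcal{K}_j := K_{j, S_j}(p_j)$ pour chaque $j\in\{1,\dots,r\}$. Par d\'efinition, $K_{j,S_j}(p_j)$ est la pro-$p_j$-extension maximale de $K_j$ non ramifi\'ee hors de $S_j$: c'est donc bien une pro-$p_j$-extension de $K_j$, et le compositum des $\mathcal{K}_j$ co\"incide avec l'extension $\mathcal{K}$ de l'\'enonc\'e. Toute famille asymptotiquement exacte $\Lcal$ contenue dans $\mathcal{K}$ v\'erifie alors, par le Th\'eor\`eme~\ref{thm:pro.plusieurs}, le th\'eor\`eme de Brauer--Siegel g\'en\'eralis\'e.

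Pour la seconde assertion, la strat\'egie consiste \`a \'etablir l'existence d'une constante $c>0$, ind\'ependante de $L$, telle que $[L:\Q]/g(L)\geq c$ pour toute sous-extension finie $L$ de $\mathcal{K}/\Q$. Une telle minoration uniforme entra\^inera, via le Lemme~\ref{lemm:asymptotiquement.mauvaise}{\it(1)}, qu'aucune famille asymptotiquement exacte $\Lcal=(L_i)$ contenue dans $\mathcal{K}$ ne peut \^etre asymptotiquement mauvaise; combin\'ee \`a l'hypoth\`ese d'exactitude asymptotique, cela forcera le caract\`ere asymptotiquement bon cherch\'e.

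Pour obtenir cette minoration uniforme, je m'appuierais sur la formule de Riemann--Hurwitz appliqu\'ee en r\'egime mod\'er\'ement ramifi\'e. Posant $K := K_1\cdots K_r$ et rempla\c{c}ant $L$ par $L\cdot K$ (op\'eration \'el\'ementaire ne modifiant le rapport $g(L)/[L:\Q]$ qu'\`a une constante multiplicative pr\`es ind\'ependante de $L$), on est ramen\'e \`a majorer le genre des sous-extensions finies de $\mathcal{K}\cdot K/K$. L'id\'ee centrale est que, en chaque place $v$ de $K$ au-dessus d'une place de $S_j$, le groupe d'inertie de $L/K$ est un quotient d'un sous-groupe d'un pro-$p_j$-groupe, donc d'ordre une puissance de~$p_j$; l'hypoth\`ese que $S_j$ ne contient aucune place divisant $p_j$ garantit alors que la caract\'eristique r\'esiduelle en $v$ est diff\'erente de $p_j$, ce qui rend cette inertie mod\'er\'ement ramifi\'ee. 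La formule standard pour le discriminant d'une extension mod\'er\'ement ramifi\'ee livre alors une majoration du type $g(L)\leq C_0\cdot [L:K]$, o\`u la constante $C_0$ ne d\'epend que des donn\'ees initiales $(K_j, S_j)_j$, d'o\`u $[L:\Q]/g(L) \geq [K:\Q]/C_0$.

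Le point d\'elicat sera l'analyse pr\'ecise de la ramification dans le compositum $\mathcal{K}\cdot K/K$ lorsque $r>1$, afin de contr\^oler uniform\'ement, place par place, les groupes d'inertie locaux. L'observation-cl\'e pour ce faire est que $\Gal(\mathcal{K}\cdot K/K)$ se plonge dans le produit $\prod_{j=1}^r \Gal(K_{j,S_j}(p_j)\cdot K/K)$; ce plongement ram\`ene le contr\^ole souhait\'e \`a une analyse facteur par facteur, chaque composante \'etant un pro-$p_j$-groupe pour lequel l'hypoth\`ese sur $S_j$ garantit la mod\'eration locale.
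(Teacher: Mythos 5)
Votre preuve est correcte et suit essentiellement la même démarche que celle du papier : application directe du Théorème~\ref{thm:pro.plusieurs} pour la première assertion, puis, pour la seconde, réduction modérément ramifiée via le compositum $K=K_1\cdots K_r$, application de Riemann--Hurwitz à $K\cdot L/K$, et conclusion par le Lemme~\ref{lemm:asymptotiquement.mauvaise}{\it(1)}. Votre discussion des groupes d'inertie (plongement de $\Gal(\mathcal{K}\cdot K/K)$ dans le produit des $\Gal(K_{j,S_j}(p_j)\cdot K/K)$ et argument d'ordre premier à la caractéristique résiduelle) justifie en fait de façon plus explicite que le papier — qui se contente d'invoquer que le compositum d'extensions modérément ramifiées l'est encore — pourquoi $\mathcal{K}\cdot K/K$ est modérément ramifiée, y compris lorsque $r>1$.
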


\begin{proof} 
Soit $\Lcal=(L_i)_{i\in\N}$ une famille asymptotiquement exacte de corps de nombres tous  contenus dans $\mathcal{K}$. Le th\'eor\`eme \ref{pro.res} montre que $\Lcal$ v\'erifie le th\'eor\`eme de Brauer--Siegel g\'en\'eralis\'e puisqu'un pro-$p_j$-groupe est  en particulier  pro-r\'esoluble.  Supposons \`a pr\'esent que, pour tout $j$, $S_j$ est fini et que les places de $S_{j}$ ne divisent pas $p_{j}$. Soit $K$ le compositum des $K_{j}$.  Sous nos hypoth\`eses, l'extension $\mathcal{K}/K$ s'\'ecrit comme compositum d'extensions mod\'er\'ement ramifi\'ees donc est elle-m\^eme mod\'er\'ement ramifi\'ee. On note $S$ l'ensemble des places de $K$ divisant l'une de celles des $S_{j}$. Pour tout~$i$, la formule de Riemann--Hurwitz (voir \cite[chapitre III, proposition 3.13]{Neukirch}) appliqu\'ee \`a l'extension $K\cdot L_i/K$ montre 
\[ g(K\cdot L_{i})
\leq [K\cdot L_{i}:K]\left(g(K)+\frac{1}{2}\sum_{\mathfrak{p}\in S}\log \mathrm{N}\mathfrak{p}\right).\]
Il vient alors 
\[ \frac{g(L_{i})}{[L_{i}:\Q]}
\leq \frac{g(K\cdot L_{i})}{[K\cdot L_{i}:\Q]}\leq \frac{g(K)}{[K:\Q]} +\frac{\sum_{\mathfrak{p}\in S}\log\mathrm{N}\mathfrak{p} }{2[K:\Q]}. \]
Par suite, le quotient $g(L_i)/[L_{i}:\Q]$ reste born\'e lorsque $i$ varie donc son inverse $[L_i:\Q]/g(L_i)$ ne saurait tendre vers $0$ lorsque $i$ tend vers $+\infty$. Comme la famille $\Lcal$ est asymptotiquement exacte,  nous concluons comme au paragraphe \ref{ss:invariants} qu'elle est asymptotiquement bonne.
\end{proof}

Lorsque $r=1$, le corollaire pr\'ec\'edent implique ainsi que  \emph{toutes les extensions infinies $\mathcal K$ contenues dans une  $S$-$p$-tour de corps de classes   d'un corps de nombres   v\'erifient le th\'eor\`eme de Brauer--Siegel g\'en\'eralis\'e}.

 \subsection{Tours de corps de nombres}
Rappelons qu'une tour de corps de nombres est toujours asymptotiquement exacte.
 
 \begin{prop}
 Soit $\Kcal=(K_i)_{i\in\N}$ une tour de corps de nombres. Si $\Kcal$ est telle que 
 \[\lim_{i\to\infty}\frac{\log[K_i:K_{i-1}]}{[K_{i-1}:\Q]}=0,\]
alors $\Kcal$ v\'erifie  le th\'eor\`eme de Brauer--Siegel g\'en\'eralis\'e. 
 \end{prop}
 
 \begin{proof} 
Avec le th\'eor\`eme \ref{theo:principal}, il suffit   de v\'erifier que $\Kcal$ est de complexit\'e galoisienne mod\'er\'ee. Pour tout $\varepsilon>0$, il existe par hypoth\`ese un entier $J\geq 1$ tel que 
\[ \frac{\log[K_j:K_{j-1}]}{[K_{j-1}:\Q]}\leq \varepsilon\, \frac{\log 3}{4}\] 
pour tout $j\geq J$. Notons $M_J := \max_{1\leq j \leq J} \log\comp_{K_{j}/K_{j-1}}$. Fixons un entier $J'\geq J$ tel que $M_J \leq \varepsilon\, g(K_i)$ pour tout $i\geq J'$. Pour tous entiers $J< j\leq i$, la borne triviale $\comp_{K_j/K_{j-1}}\leq [K_j:K_{j-1}]!$,  l'in\'egalit\'e   $g(K_i)\geq  g(K_j)$ et la borne de Minkowski \eqref{ineq:minkowski} donnent alors la majoration :  
\begin{align*}
\frac{\log\comp_{K_j/K_{j-1}}}{g(K_i)} 
&\leq \frac{\log\big([K_j:K_{j-1}]!\big)}{g(K_i)} 
\leq \frac{[K_j:\Q]}{g(K_j)}\, \frac{\log[K_j:K_{j-1}]}{[K_{j-1}:\Q]} 
\leq \frac{4}{\log 3}\, \frac{\log[K_j:K_{j-1}]}{[K_{j-1}:\Q]} \leq \varepsilon.
\end{align*}
Par suite, nous avons 
\[\frac{\log\comp_{K_i/\Q}}{g(K_i)}\leq \max_{1\leq j \leq i} \left\{\frac{\log\comp_{K_j/K_{j-1}}}{g(K_i)}\right\} = \max\left\{ \frac{M_J}{g(K_i)}, \max_{J< j \leq i}\left\{ \frac{\log\comp_{K_j/K_{j-1}}}{g(K_i)} \right\}\right\} \leq \varepsilon\]
pour tout entier $i\geq J'$. La tour $\Kcal$ est donc de complexit\'e galoisienne mod\'er\'ee. 
\end{proof}

\subsection{Extensions infinies}
Nous dirons, suivant en cela \cite{TV}, qu'une extension alg\'ebrique $\mathcal L$ de~$\Q$ {\it v\'erifie le th\'eor\`eme de Brauer--Siegel g\'en\'eralis\'e} s'il existe une tour de corps de nombres dont la r\'eunion est $\mathcal L$ qui v\'erifie le th\'eor\`eme de Brauer--Siegel g\'en\'eralis\'e (c'est alors le cas pour toute tour de corps de nombres dont la r\'eunion est $\mathcal L$  d'apr\`es les r\'esultats de \cite[\S 7]{TV}).
\begin{prop}\label{prop:sur.ext}
Soit $(K_j)_{j\in\N}$ une famille de corps de nombres de complexit\'e galoisienne mod\'er\'ee.
Toute extension alg\'ebrique  $\mathcal{L}$ de $\Q$ contenant 
$\bigcup_{j\in\N} K_j$ satisfait le th\'eor\`eme de Brauer--Siegel g\'en\'eralis\'e.
\end{prop}

\begin{proof}
Fixons une premi\`ere tour $(L_i)_{i\in\N}$ de corps de nombres dont la r\'eunion est $\mathcal L$ et construisons par r\'ecurrence une seconde tour $(M_i)_{i\in\N}$ de r\'eunion $\mathcal L$ et dont la complexit\'e galoisienne est mod\'er\'ee. Nous posons $M_0= \Q$ et, \`a $i\geq 1$ donn\'e, nous d\'efinissions $M_i$ \`a partir de $M_{i-1}$ comme suit. Pour tout $j\geq 1$,  posons $M_{i,j} := M_{i-1}\cdot L_i \cdot K_j$. Les inclusions $\Q\subset K_j \subset M_{i,j}$ impliquent  
\[ \comp_{M_{i,j}/\Q} \leq \max\{\comp_{K_j/\Q}, \comp_{M_{i,j}/K_j}\} \leq \max\{\comp_{K_j/\Q}, [M_{i,j}:K_j] !\} \leq \max\{\comp_{K_j/\Q}, [M_{i-1}\cdot L_i :\Q] !\}.\]
Ainsi,  le quotient $(\log\comp_{M_{i, j}/\Q})/g(K_j)$ tend vers $0$ lorsque $j$ tend vers $+\infty$. Soient $j_i$ l'entier minimal tel que $(\log\comp_{M_{i, j_i}/\Q})/g(K_{j_i})\leq 2^{-i}$ et $M_i := M_{i,j_i}$. Puisque $g(K_{j_i})\leq g(M_i)$, le quotient $(\log \comp_{M_i/\Q})/g(M_i)$ tend vers $0$ lorsque $i$ tend vers l'infini. Par suite la tour $(M_i)_{i\in\N}$, dont la r\'eunion est $\mathcal L$, est  de complexit\'e galoisienne mod\'er\'ee. Ceci conduit \`a la conclusion souhait\'ee.
\end{proof}

Le r\'esultat ci-dessus permet d'obtenir  de nombreux corollaires. Par exemple, une extension \linebreak alg\'ebrique de $\Q$ contenant une extension galoisienne par pas $\mathcal K /\Q$ de degr\'e infini v\'erifie le th\'eor\`eme de Brauer--Siegel g\'en\'eralis\'e. \'Enon\c{c}ons \'egalement le cas particulier suivant:
\begin{coro}
Soit $\mathcal{K}$ une extension alg\'ebrique de $\Q$. 
On suppose que $\mathcal K$ contient une infinit\'e de corps quadratiques.
Alors $\mathcal{K}$ satisfait le th\'eor\`eme de Brauer--Siegel g\'en\'eralis\'e.
\end{coro}

\begin{proof}
Appliquer la proposition \ref{prop:sur.ext} \`a la famille des corps quadratiques. 
\end{proof}

Ce corollaire est \'etonnant en ce que les preuves d'autres cas particuliers de la conjecture de Tsfasman--Vl\u{a}du\c{t}  se basent usuellement sur la finitude du nombre de corps quadratiques contenus dans les corps de nombres de la famille consid\'er\'ee (voir par exemple  le lemme 7.3 et le th\'eorème 7.3 de \cite{TV}).

\subsection{Exemples explicites}\label{ss:exemples.explicites} 
 Voici quelques illustrations d'application  du corollaire~\ref{coro:pro.plusieurs}, tir\'ees des articles \cite{Mpmj}, \cite{HM1}, \cite{HM2} et \cite{HMR21}.

\begin{exple} Dans \cite[\S3]{Mpmj}, Maire  construit explicitement des corps de nombres $M/\Q$ dont la $2$-tour de Hilbert $\mathcal{M}_{2}$ est de degr\'e infini sur $M$. Les corps globaux infinis $\mathcal{M}_{2}$ de \cite[\S3]{Mpmj} v\'erifient le th\'eor\`eme de Brauer--Siegel g\'en\'eralis\'e, par le corollaire~\ref{coro:pro.plusieurs}.

Maire utilise ces corps infinis pour produire des exemples de corps de nombres $K$ dont  la tour de Hilbert~$\mathcal{H}_K/K$ est finie mais dont l'extension galoisienne maximale non ramifi\'ee $\mathcal{K}_{\infty}/K$ est infinie (voir son th\'eor\`eme $3.1)$. Comme l'extension $\mathcal{K}_\infty/K$ est galoisienne, nos r\'esultats impliquent que $\mathcal{K}_{\infty}$ v\'erifie le th\'eor\`eme de Brauer--Siegel g\'en\'eralis\'e. On ne peut malheureusement pas prouver la m\^eme conclusion pour les familles form\'ees de sous-extensions arbitraires de $\mathcal{K}_\infty/K$.  
\end{exple}

\begin{exple}
Dans ce m\^eme article, Maire construit (voir \cite[th\'eor\`eme	 5.1]{Mpmj}) \'egalement une suite infinie $(F_i)_{i\in\N}$  de corps quadratiques sur $\Q$ tels que, pour tout $i$, la tour de Hilbert $\mathcal{H}_{F_i}$ est une extension finie de~$F_i$ bien que $F_i$ admette une extension non ramifi\'ee infinie $\mathcal{K}_i$ de degr\'e surnaturel~$2^\infty$. Chacun de ces corps globaux infinis $\mathcal{K}_i$ v\'erifie le th\'eor\`eme de Brauer--Siegel g\'en\'eralis\'e.
\end{exple}

\begin{exple}  Dans leurs exemples A1, B1 et C1 de \cite[\S3.2]{HM1} ainsi que ceux de \cite[\S3]{HM2}, Hajir et Maire construisent explicitement des sp\'ecimens de corps de nombres $K$ et d'ensembles finis~$T$ de places de $K$ tels que l'extension $K_T(2)/K$ est infinie. Dans chacun de ces exemples,  notre corollaire~\ref{coro:pro.plusieurs} montre que l'extension $K_{T}(2)$ v\'erifie le th\'eor\`eme de Brauer--Siegel g\'en\'eralis\'e.
\end{exple}

\begin{exple} 
Les exemples de \cite[\S3]{HM2} ont plus tard \'et\'e repris par Zykin dans \cite[th\'eor\`eme~4]{Zykin}. Il obtient  des tours asymptotiquement bonnes $\mathcal{L}$ pour lesquelles $\beta(\mathcal{L})$ est, conditionnellement \`a GRH, exceptionnellement petit. Sp\'ecifiquement, Zykin montre que  \[0,56497 \leq \beta(\mathcal{L}) \leq 0,59749.\] 
Dans cet encadrement, la minoration  de $\beta(\mathcal{L})$ reste valable inconditionnellement, seule la majoration n\'ecessite GRH. Avec la m\^eme m\'ethode de programmation lin\'eaire, on pourrait toutefois obtenir  une majoration moins pr\'ecise, mais inconditionnelle, de $\beta(\mathcal{L})$ en utilisant la version inconditionnelle (plus faible) de l'in\'egalit\'e fondamentale de Tsfasman--Vl\u{a}du\c{t}.

Zykin consid\`ere un corps totalement complexe $K$ de degr\'e $12$ qui n'est ni galoisien par pas  ni \`a cl\^oture galoisienne  r\'esoluble sur $\Q$ ($\mathfrak{S}_{6}$ est un quotient de son groupe de Galois) mais admet une $\{\mathfrak{p}\}$-$2$-tour  de corps de classes $\mathcal{L}=K_{\{\mathfrak{p}\}}(2)$ infinie  mod\'er\'ement ramif\'ee, o\`u $\mathfrak{p}$ est un id\'eal premier de norme $9$ (voir \cite{HM2}). 

Avant le pr\'esent travail, supposer GRH \'etait la seule fa\c{c}on d'assurer que $\mathcal L$ v\'erifie la conjecture de Tsfasman--Vl\u{a}du\c{t}. Avec notre corollaire~\ref{coro:pro.plusieurs}, on sait \`a pr\'esent que la tour $\mathcal{L}$  v\'erifie le th\'eor\`eme de Brauer--Siegel g\'en\'eralis\'e. Plus pr\'ecis\'ement, si l'on \'ecrit $\mathcal{L}=\bigcup_i L_i$, la limite
\[ \mathcal{BS}(\mathcal{L}) := \lim_{i\to\infty}\frac{\log\big(h(L_i)\ R(L_i)\big)}{g(L_i)}\]
existe et l'on a \emph{inconditionnellement} $ \BS(\mathcal{L}) = \beta(\mathcal L) \geq 0,56497$.   
\end{exple}

\begin{exple}
Il suit aussi du corollaire~\ref{coro:pro.plusieurs} que les exemples obtenus par \og{}coupe dans \linebreak $\Gal(K_S(p)/K)$\fg{}  dans le tout r\'ecent travail \cite{HMR21}  v\'erifient le th\'eor\`eme de Brauer--Siegel g\'en\'eralis\'e. Cela sugg\`ere de belles perspectives : apr\`es avoir donn\'e des versions effectives de leurs r\'esultats, on pourra, avec l'approche de \cite{Zykin}, exhiber des tours de corps de nombres~$\mathcal{L}$ dont les ratios de Brauer--Siegel $\BS(\mathcal{L})$ sont remarquablement petits.
\end{exple}

\bigskip
\noindent\hfill\rule{7cm}{0.5pt}\hfill\phantom{.}
\medskip

\noindent \textbf{\large Remerciements --} 
Les auteurs remercient St\'ephane Louboutin et Christian Maire  pour leurs remarques sur une version pr\'ec\'edente de ce travail, celles-ci ont permis d'am\'eliorer nos résultats et leur pr\'esentation. 
Le premier auteur a \'et\'e en partie financ\'e par la Swiss National Science Foundation  (au travers de la bourse SNSF \#170565 attribu\'ee \`a Pierre Le Boudec), le second   par le projet {\it GA CROCOCO} de la r\'egion Bourgogne Franche-Comt\'e.
Les deux premiers auteurs ont \'et\'e partiellement financ\'es par le projet ANR-17-CE40-0012 {\it FLAIR}.
\medskip

\newcommand{\mapolicebackref}[1]{\hspace*{-2pt}{\textcolor{teal}{\footnotesize $\uparrow$ #1}}}
\renewcommand*{\backref}[1]{\mapolicebackref{#1}}
\hypersetup{linkcolor= teal!80}

\small 

\normalsize

\vfill
\noindent\rule{7cm}{0.5pt}

\smallskip
\noindent
{Richard {\sc Griffon}} {(\it \href{richard.griffon@uca.fr}{richard.griffon@uca.fr})}  --
{\sc Laboratoire de Math\'ematiques Blaise Pascal, Universit\'e Clermont Auvergne}. 
Campus Universitaire des C\'ezeaux,
3 place Vasarely,
TSA 60026 CS 60026,
63178 Aubi\`ere Cedex (France). 

\smallskip
\noindent 
{Philippe {\sc Lebacque}} {(\it \href{philippe.lebacque@univ-fcomte.fr}{philippe.lebacque@univ-fcomte.fr})}  --
{\sc Laboratoire de Math\'ematiques de Besan\c con, Universit\'e Bourgogne Franche-Comt\'e}. 
UFR Sciences et Techniques,
16 route de Gray, 25030 Besan\c con (France).

\smallskip
\noindent 
{Ga\"el {\sc R\'emond}} {(\it \href{Gael.Remond@univ-grenoble-alpes.fr}{Gael.Remond@univ-grenoble-alpes.fr})}  --
{\sc Institut Fourier, Universit\'e Grenoble Alpes}.
100 rue des math\'ematiques,
CS 40700,
38058 Grenoble Cedex 9
 (France).

\end{document}